\newtheorem{theorem}{Theorem}[section]
\newtheorem{lemma}[theorem]{Lemma}
\newtheorem{thm}{Theorem}
\newtheorem{proof}{\textmd{\textit{Proof.}}}
\newtheorem{remark}[theorem]{Remark}
\newcommand{\qedd}{\hfill \Box}
\newcommand{\N}{\ensuremath{\mathbb{N}}}
\newcommand{\R}{\ensuremath{\mathbb{R}}}
\newcommand{\cP}{\ensuremath{\mathcal{P}}}
\newcommand{\Span}[1]{\mathop{\mathrm{Span}}\langle{#1}\rangle}
\newcommand{\lr}[2]{\left\langle{#1},{#2}\right\rangle}
\def\spt{\mathop{\mathrm{supp}}\nolimits}
 \long\def\@makefntext#1{\parindent 1em\noindent
  \hbox to 1.3em{\hss $^{\@thefnmark}$}#1}
\title{Equivalence of two orthogonalities\\ between probability measures}
\author{
Asuka Takatsu\thanks{Graduate School of Mathematics, Nagoya University, Nagoya 464-8602,
Japan ({\sf takatsu@math.nagoya-u.ac.jp}) \&
Max-Planck-Intitut f\"ur Mathematik, Vivatsgasse 7, 53111 Bonn, Germany.}}
\date{\empty}
\begin{document}

\maketitle

\begin{abstract}
Given any two probability measures on a Euclidean space with mean $0$ and finite variance, 
we demonstrate that the two probability measures are orthogonal in the sense of Wasserstein geometry if and only if 
the two spaces by spanned by the supports of each probability measure are orthogonal.
\footnote[0]{
{\bf Mathematics Subject Classification (2010): }60D05; 51F20.}
\footnote[0]{
{\bf keywords:} Wasserstein geometry; orthogonality.} 
\end{abstract}

\section{Introduction and Main theorem}
This paper is concerned with the two orthogonalities between a pair of probability measures on $\R^d$:
one is measured by the Wasserstein metric and
the other is  given in terms of the orthogonality between spaces spanned by the supports of probability measures.

Let $\cP_{2}$ be the set of Borel probability measures  $\mu$ on $\R^d$ with finite variance, namely  
\[
\int_{\R^d} |x|^2 d\mu(x) <\infty.
\]
Given any $\mu,\nu\in\cP_2$, we define their {\it $(L^2$-$)$Wasserstein distance} by 
\begin{equation}\label{eq:wass}
W_2(\mu,\nu):=\sqrt{\inf_{\sigma \in \Pi(\mu,\nu)} \int_{\R^d \times \R^d} |x-y|^2 d\sigma(x,y)},
\end{equation}
where $\Pi(\mu,\nu)$ is the set of probability measures $\sigma$ on $\R^d \times \R^d$ with marginals $\mu$ and $\nu$, 
that is,  $\sigma[B \times \R^d]=\mu[B]$ and $\sigma[\R^d \times B]=\nu[B]$ hold for all Borel sets $B \subset \R^d$.
The pair $(\cP_2,W_2)$ is a metric space and inherits several properties of $\R^d$ (for instance see~\cite[Section~6]{Vi2}).
For example, $(\cP_2,W_2)$ has a cone structure as well as $\R^d$.
We say that a metric space $(X,d_X)$ {\it has a cone structure} if there exists a metric space $(\Sigma, \angle)$ such that 
$(X,d_X)$ is isometric to the quotient space $(\Sigma \times [0,\infty)/\sim, d_C)$,  
where the equivalence relation $\sim$ is defined by $(\xi,s) \sim (\eta,t)$ if we have $(\xi,s) =(\eta,t)$ or $s = t = 0$, and the distance function $d_C$ is given by 
\[
d_C((\xi,s), (\eta,t)):= \sqrt{ s^2+t^2-2st \cos \left(\min\{\angle(\xi, \eta),\pi \} \right)}.
\]
Of course, $\R^d$ is isometric to the quotient space of  $(d-1)$-sphere with its standard metric and   
it was proved in~\cite{TY} that $(\cP_2,W_2)$ is isometric to the quotient space of  
\[
\cP_2:=\left\{\mu \in \cP_2 \, |\, W_2(\delta_0, \mu)=1  \right\}
\]
with the metric given by 
\[
\angle (\mu,\nu):=\arccos\left(1-\frac12W_2(\mu,\nu)^2 \right).
\]
We remark that 
$\angle (\mu,\nu)$  is regarded as the angle between the two Wasserstein geodesics from $\delta_0$ to $\mu$ and from $\delta_0$ to $\nu$, 
which in addition coincides with the comparison angle of $\angle\mu\delta_0\nu$ defined by the cosine formula of the form 
\[
\angle(\mu,\nu)
=\angle\mu\delta_0\nu
:= \arccos\left(\frac{W_2(\delta_0,\mu)^2+W_2(\delta_0,\nu)^2-W_2(\mu,\nu)^2 }{2W_2(\delta_0,\mu)W_2(\delta_0,\nu)}\right) .
\]
Note that this formula can be extended to any pair in $\cP_2\setminus\{\delta_0\}$.
%
Moreover, $\cP_2$ is isometric to the direct product of $\R^d$ and the convex subspace given by 
\[
\cP_{2,0}:=\left\{\mu \in \cP_2 \biggm| \int_{\R^d} x d\mu(x)=0 \right\}
\]
which also has a cone structure and contains no line, that is  its  vertex angle given by
\begin{equation*}
\sup\{ \angle(\mu,\nu)\, |\,  \mu,\nu  \in \cP_{2,0} \setminus \{\delta_0\} \} 
\end{equation*}
is less than $\pi$.
Indeed for any $\mu,\nu \in \cP_{2,0}\setminus \{\delta_0\}$,  
the marginals of the product measure $\mu \times \nu$ are obviously $\mu$ and $\nu$,   
and the fact  
\[
W_2(\delta_0,\mu)^2= \int_{\R^d} |x|^2 d\mu(x), \qquad W_2(\delta_0,\nu)^2= \int_{\R^d} |y|^2 d\nu(y)
\]
 together with the condition that the means of $\mu$ and $\nu$ are $0$ yield that 
\begin{equation*}
 W_2(\mu,\nu)^2  
\leq \int_{\R^d\times\R^d} |x-y|^2 d(\mu\times \nu) (x,y)
=W_2(\delta_0,\mu)^2+W_2(\delta_0,\nu)^2, 
\end{equation*}
which shows  
\begin{equation}\label{eq:2}
\angle(\mu,\nu)=\arccos\left(\frac{W_2(\delta_0,\mu)^2+W_2(\delta_0,\nu)^2-W_2(\mu,\nu)^2 }{2W_2(\delta_0,\mu)W_2(\delta_0,\nu)}\right)  \leq \frac\pi2.
\end{equation}
In the case of $d \geq 2$, the equality in~\eqref{eq:2} holds, for example, by taking  
$\mu:=(\delta_\xi + \delta_{-\xi})/2$ and $\nu:=(\delta_\eta + \delta_{-\eta})/2$ with $\xi,\eta \in \R^d\setminus\{0\}$ satisfying that $\lr{\xi}{\eta} =0$. 

We provide a necessary and sufficient condition for a pair in $\cP_{2,0}  \setminus\{\delta_0\}$ to attain the equality in~\eqref{eq:2}, in other words,  
for a pair of Wasserstein geodesics in $\cP_{2,0}$ stating from $\delta_0$  to be orthogonal.
For $A \subset \R^d$, let $\Span{A}$ denote  the linear span of $A$.
\begin{thm}
Given any $\mu,\nu \in \cP_{2,0}\setminus\{\delta_0\}$,  
the condition $\angle (\mu,\nu)=\pi/2$ is equivalent to the orthogonality between $\Span{\spt(\mu)}$ and $\Span{\spt(\nu)}$.
\end{thm}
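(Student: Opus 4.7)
The plan is to establish the two directions separately, with the non-trivial direction exploiting 2-cyclical monotonicity of quadratic-cost optimal couplings together with the mean-zero hypothesis.

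For the sufficient direction, I would note that if $\Span{\spt(\mu)} \perp \Span{\spt(\nu)}$, then $\lr{x}{y}=0$ for every $x \in \spt(\mu)$ and $y \in \spt(\nu)$, so any $\sigma \in \Pi(\mu,\nu)$ satisfies $\lr{x}{y}=0$ $\sigma$-a.e.\ since $\spt(\sigma)\subset \spt(\mu)\times\spt(\nu)$. Expanding $|x-y|^2$ gives
\[
\int_{\R^d\times\R^d} |x-y|^2 d\sigma(x,y) = \int_{\R^d}|x|^2 d\mu(x) + \int_{\R^d}|y|^2 d\nu(y)
\]
independently of $\sigma$, and taking the infimum yields $W_2(\mu,\nu)^2 = W_2(\delta_0,\mu)^2 + W_2(\delta_0,\nu)^2$, i.e.\ $\angle(\mu,\nu)=\pi/2$.

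For the converse, assume $\angle(\mu,\nu)=\pi/2$, equivalently $W_2(\mu,\nu)^2 = W_2(\delta_0,\mu)^2 + W_2(\delta_0,\nu)^2$. The computation preceding the theorem, which crucially used that the means vanish, shows that the product coupling $\mu\times\nu$ already attains this value, so $\mu\times\nu$ is itself an optimal coupling for the quadratic cost. I would then invoke 2-cyclical monotonicity of the support of such an optimal coupling: for any $(x_1,y_1),(x_2,y_2)\in \spt(\mu)\times\spt(\nu) = \spt(\mu\times\nu)$,
\[
\lr{x_1-x_2}{y_1-y_2}\geq 0.
\]
Because $(x_1,y_2)$ and $(x_2,y_1)$ also lie in the product support, swapping $y_1$ and $y_2$ gives the reverse inequality, forcing $\lr{x_1-x_2}{y_1-y_2}=0$ throughout $\spt(\mu)\times\spt(\nu)$.

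To close the argument, I would fix $x_0 \in \spt(\mu)$ and $y_0 \in \spt(\nu)$, rewrite the identity as $\lr{x-x_0}{y-y_0}=0$ on $\spt(\mu)\times\spt(\nu)$, and integrate against $d\mu(x)\,d\nu(y)$: the mean-zero conditions kill three of the four terms and leave $\lr{x_0}{y_0}=0$. Since $x_0,y_0$ are arbitrary, this proves $\Span{\spt(\mu)}\perp\Span{\spt(\nu)}$. The main obstacle I foresee is making the 2-cyclical monotonicity step fully rigorous; I expect either to cite the standard optimal-transport result or, for self-containedness, to produce an explicit mass-swap between small positive-measure neighborhoods of the points $(x_i,y_j)$ showing that $\lr{x_1-x_2}{y_1-y_2}<0$ would strictly decrease the transport cost and contradict the optimality of $\mu\times\nu$. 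The mean-zero hypothesis enters twice --- once to make $\mu\times\nu$ optimal, and again in the final integration --- and is precisely what upgrades the recentered orthogonality to the desired pointwise identity $\lr{x_0}{y_0}=0$.
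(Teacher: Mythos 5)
Your proposal is correct, and in the hard direction it shares the paper's skeleton --- the mean-zero computation makes $\mu\times\nu$ optimal whenever $\angle(\mu,\nu)=\pi/2$, and two-point cyclical monotonicity applied on the product support (where $(x_1,y_2)$ and $(x_2,y_1)$ are automatically available) forces $\lr{x_1-x_2}{y_1-y_2}=0$; this is exactly the paper's use of its Lemma 2.3 (Villani's optimality criterion), which you can simply cite rather than reprove by a mass-swap. Where you genuinely diverge is in upgrading this recentered identity to orthogonality of the spans. The paper fixes $(\xi,\eta)\in\spt(\mu\times\nu)$, concludes $\Span{\spt(\mu)-\xi}\perp\Span{\spt(\nu)-\eta}$, and then needs two preparatory lemmas (Lemmas 2.1 and 2.2), built on a linear-algebra argument about the mean, to show $\sum_{\xi\in\spt(\mu)}\Span{\spt(\mu)-\xi}=\Span{\spt(\mu)}$. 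Your alternative --- integrate $\lr{x-x_0}{y-y_0}=0$ against $d\mu(x)\,d\nu(y)$, let the vanishing means kill three of the four terms, and conclude $\lr{x_0}{y_0}=0$ for arbitrary $x_0\in\spt(\mu)$, $y_0\in\spt(\nu)$ --- reaches the same conclusion in one stroke and makes both lemmas unnecessary; it is legitimate since $\mu\times\nu$ is concentrated on $\spt(\mu)\times\spt(\nu)$ and the integrand is integrable because $\int_{\R^d}|x|\,d\mu(x)<\infty$ by finite variance. Your sufficiency direction is also slightly more elementary than the paper's: rather than verifying condition (ii) of the optimality criterion for $\mu\times\nu$, you observe that orthogonality of the supports makes the transport cost identical for \emph{every} coupling, so no optimality criterion is needed there at all. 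What the paper's route buys is two standalone statements of independent interest (notably Lemma 2.2 on shifted spans); what yours buys is brevity and a lighter toolkit, the only nontrivial input being cyclical monotonicity of optimal plans.
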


\acknowledgement
The author would like to express her gratitude to C\'edric Villani for his valuable comments and discussions.
She is  grateful for the hospitality of Institut des Hautes \'Etudes Scientifiques, where most of this work was done.
She would also like to thank Takumi Yokota for his comments.
This work is partially supported by JSPS-IH\'ES (EPDI)  fellowship.
\section{Proof of Theorem}
We first prepare two lemmas derived from the feature  of mean.
\begin{lemma}\label{lem:mean}
For any $\mu \in\cP_{2,0}\setminus\{\delta_0\}$,  there exist $\{\xi_j\}_{j=0}^k \subset \spt(\mu)$ and $\{a_j\}_{j=0}^k \subset \R$ 
such that  $\sum_{j=0}^k a_j \xi_j=0$ and  $\sum_{j=0}^k a_j  \neq 0$.
\end{lemma}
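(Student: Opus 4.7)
The plan is to argue by contradiction: if the conclusion fails, then every finite linear relation $\sum_{j=0}^k a_j\xi_j=0$ among points $\xi_j\in\spt(\mu)$ satisfies $\sum_{j=0}^k a_j=0$. I would exploit this well-definedness to define $\phi\bigl(\sum_i c_i\xi_i\bigr):=\sum_i c_i$ on $V:=\Span{\spt(\mu)}$; the negated conclusion is precisely what is required for $\phi$ to be a single-valued (and then manifestly linear) functional on $V$.

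Since $V$ is a finite-dimensional subspace of $\R^d$, Riesz representation yields some $w\in V$ with $\phi(v)=\lr{v}{w}$ on $V$, and applied to the one-term sum $1\cdot\xi$ this forces $\lr{\xi}{w}=1$ for every $\xi\in\spt(\mu)$. Integrating this identity against $\mu$ and using $\mu\in\cP_{2,0}$ yields
\[
1=\int_{\R^d}1\,d\mu(x)=\int_{\R^d}\lr{x}{w}\,d\mu(x)=\lr{\int_{\R^d}x\,d\mu(x)}{w}=\lr{0}{w}=0,
\]
which is the sought contradiction. Geometrically, the negated conclusion traps $\spt(\mu)$ inside an affine hyperplane avoiding the origin, while the mean-zero hypothesis places the barycenter of $\mu$ inside any such affine hyperplane containing the support.

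The only step requiring a little care is the well-definedness of $\phi$, which is essentially tautologically supplied by the contradiction hypothesis; after that the computation is one line exchanging the integral with the inner product. As an alternative route one could skip the Riesz representation and instead directly invoke the standard fact that the barycenter of a probability measure with finite first moment lies in the closed convex hull of its support, hence in the affine hull of $\spt(\mu)$, which by Carath\'eodory gives the desired finite affine combination.
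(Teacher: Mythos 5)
Your proof is correct, and it arrives at the very same final contradiction as the paper --- a vector $w$ with $\lr{\xi}{w}=1$ for every $\xi\in\spt(\mu)$ is incompatible with the mean-zero condition once you integrate against $\mu$ --- but you reach that point by a different route. The paper fixes a basis $\{\xi_i\}_{i=1}^k\subset\spt(\mu)$ of $\Span{\spt(\mu)}$, builds explicit coordinate functionals $a_j(x)$ with $x=\sum_{j=1}^k a_j(x)\xi_j$, and shows some $\xi_0\in\spt(\mu)$ must have coordinate sum $\neq 1$ (otherwise integrating the identity $\sum_j a_j(x)=\lr{x}{w}=1$ over $\spt(\mu)$ gives $1=0$); this is mildly constructive, as it exhibits the desired family $\{\xi_j\}_{j=0}^k$ with $a_0=-1$ explicitly. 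You instead negate the statement globally and observe that the negation is precisely the well-definedness of the coefficient-sum functional $\phi$ on the span, then Riesz-represent $\phi$; this is tidier and basis-free, at the cost of being non-constructive. One routine point deserves a word in a write-up: to check well-definedness you must merge two representations $\sum_i c_i\xi_i=\sum_i c_i'\xi_i'$ into a single linear relation over the union of the points (combining coefficients of repeated points) before invoking the negated conclusion. Your sketched alternative is the genuinely different and most geometric argument: the lemma is exactly the assertion that $0$ lies in the affine hull of $\spt(\mu)$, and since the barycenter of a probability measure with finite first moment lies in the closed convex hull of its support, which is contained in the (closed) affine hull, one gets $0=\sum_j\lambda_j\xi_j$ with $\sum_j\lambda_j=1$ directly; Carath\'eodory is not even needed there, since membership in the affine hull already means being a finite affine combination.
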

\begin{proof}
Suppose that the dimension of $\Span{\spt(\mu)}$ equals $k$
and $\{\xi_i\}_{i=1}^k \subset \spt(\mu)$ is a basis of $\Span{\spt(\mu)}$.
For any orthonormal basis $\{u_i\}_{i=1}^k$ of $\Span{\spt(\mu)}$, there exists $\{a_{ij}\}_{1 \leq i, j \leq k} \subset \R$ such that $u_i=\sum_{j=1}^k a_{ij}\xi_j$.
We define the functions $p_i$ and $a_j$ on $\R^d$ by $p_i(x):=\lr{x}{u_i}$ and $a_j(x):= \sum_{i=1}^k p_i(x)  a_{ij} $.
If $\xi_0\in \spt(\mu)$ satisfies  $\sum_{j=1}^k  a_j(\xi_0)  \neq 1$, then $\{ \xi_j\}_{j=0}^k$ and  $\{ a_j (\xi_0) \}_{j=0}^k $ with $a_0(\xi_0)=-1$ are the desired families 
since we have 
\[
\xi_0
=\sum_{i=1}^k p_i(\xi_0) u_i
=\sum_{i=1}^k p_i(\xi_0)  \left(  \sum_{j=1}^k  a_{ij}  \xi_j \right) 
=\sum_{j=1}^k\left(  \sum_{i=1}^k p_i(\xi_0)  a_{ij} \right) \xi_j
=\sum_{j=1}^k a_j(\xi_0) \xi_j.
\]
Such a  point always exists, otherwise any $x\in \spt(\mu)$ satisfies $\sum_{j=1}^k a_j(x) = 1$ and we have a contradiction as   
\[
1
=\int_{\R^d} \sum_{j=1}^k  a_j(x)  d\mu(x)
=\int_{\R^d} \sum_{j=1}^k  \sum_{i=1}^k p_j(x) a_{ij} d\mu(x)
=\int_{\R^d} \lr{x}{ \sum_{j=1}^k  \sum_{i=1}^k a_{ij} u_i} d\mu(x)
=0,
\]
where the last equality follows from the definition of mean.
$\qedd$
\end{proof}
For a point $\xi \in \R^d$ and a family $\{ \Xi_\lambda\}_{\lambda \in \Lambda}$ of subsets in $\R^d$,  we set 
\begin{gather*}
\Xi_\lambda-\xi:=\{\xi_\lambda-\xi  \, | \, \xi_\lambda \in \Xi_\lambda \}, \qquad
\sum_{\lambda \in \Lambda} \Xi_\lambda:=\left\{ \sum_{i=1}^n \xi_{\lambda_i}  \Bigm| \xi_{\lambda_i} \in \Xi_{\lambda_i},\ \lambda_i \in \Lambda,\ \exists n \in \N \right\}.
\end{gather*}
\begin{lemma}\label{lemma3}
Given any $\mu \in\cP_{2,0} \setminus\{\delta_0\}$, we have 
\[
\sum_{\xi\in \spt(\mu)} \Span{\spt(\mu)-\xi}=\Span{\spt(\mu)}.
\]
\end{lemma}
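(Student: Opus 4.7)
My plan is to verify the two inclusions separately. The containment $\sum_{\xi \in \spt(\mu)} \Span{\spt(\mu) - \xi} \subset \Span{\spt(\mu)}$ is automatic: for any $\xi \in \spt(\mu)$, the translate $\spt(\mu) - \xi$ already sits inside $\Span{\spt(\mu)}$ (because $\spt(\mu)\subset \Span{\spt(\mu)}$ and the latter is a subspace), so does its linear span, and a sum of subspaces of $\Span{\spt(\mu)}$ stays inside $\Span{\spt(\mu)}$.

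The substance lies in the reverse inclusion. I will first record the structural observation that the right-hand side is itself a linear subspace of $\R^d$, since it is a sum of linear subspaces and scalar multiples stay within each summand. Consequently, to obtain $\Span{\spt(\mu)} \subset \sum_{\xi\in\spt(\mu)}\Span{\spt(\mu)-\xi}$ it is enough to exhibit $\spt(\mu)$ inside the right-hand side, and for this I need only prove the pointwise statement that $\eta \in \Span{\spt(\mu) - \eta}$ for every $\eta \in \spt(\mu)$.

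Here is where Lemma~\ref{lem:mean} enters. It supplies points $\{\xi_j\}_{j=0}^k \subset \spt(\mu)$ and scalars $\{a_j\}_{j=0}^k \subset \R$ with $\sum_{j=0}^k a_j \xi_j = 0$ and $A := \sum_{j=0}^k a_j \neq 0$. For an arbitrary $\eta \in \spt(\mu)$ I rewrite this relation by subtracting $\eta$ inside each term, i.e.\ $\sum_j a_j (\xi_j - \eta) + A \eta = 0$, and solve to get $\eta = -A^{-1}\sum_j a_j (\xi_j - \eta)$. Since each $\xi_j - \eta$ lies in $\spt(\mu) - \eta$, this displays $\eta$ as a linear combination of elements of $\spt(\mu)-\eta$, hence $\eta \in \Span{\spt(\mu) - \eta}$, as required.

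I do not foresee any real obstacle; the only conceptually important point is that the mean-zero hypothesis is essential and enters solely through the condition $A \neq 0$ in Lemma~\ref{lem:mean}. Without it, the dependence relation might be genuinely linear rather than affine (think of $\mu=\delta_\xi$ with $\xi\neq0$, where $\Span{\spt(\mu)-\xi}=\{0\}$ but $\Span{\spt(\mu)}=\R\xi$), and the argument would collapse. With the lemma in hand, the proof is essentially a one-line manipulation of the affine relation.
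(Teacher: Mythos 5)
Your proof is correct and takes essentially the same route as the paper: both rest on Lemma~\ref{lem:mean} and the same rewriting of the affine relation $\sum_{j} a_j \xi_j = 0$ with $\sum_j a_j \neq 0$. If anything, your version is marginally tidier, since you apply the identity only at support points $\eta$ (where $\xi_j - \eta \in \spt(\mu)-\eta$ is immediate) and then use that the right-hand side is a subspace, whereas the paper writes $x = \sum_{j} (a_j/a)(x-\xi_j)$ directly for arbitrary $x \in \Span{\spt(\mu)}$, leaving the membership $x - \xi_j \in \Span{\spt(\mu)-\xi_j}$ implicit.
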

\begin{proof}
Since the relation $\Span{\spt(\mu)-\xi} \subset \Span{\spt(\mu)} $ is trivially true for any $\xi \in \spt(\mu)$,   
the relation $ \sum_{\xi\in \spt(\mu)} \Span{\spt(\mu)-\xi} \subset \Span{\spt(\mu)}$ is also true.

Lemma~\ref{lem:mean} ensures the existences of $\{\xi_j\}_{j=0}^k \subset  \spt(\mu)$ and $\{a_j\}_{j=0}^k \subset \R$ such that $\sum_{j=0}^k a_j \xi_j=0$ and  $a:=\sum_{j=0}^k a_j \neq 0$. 
Then for any $x\in \Span{\spt(\mu)} $, we find that 
\[
x=\sum_{j=0}^k \frac{a_j}{a}(x-\xi_j) \in\sum_{\xi\in \spt(\mu)} \Span{\spt(\mu)-\xi} .
\]
$\qedd$
\end{proof}
Let us  now prove Theorem by using  the following known result.
\begin{lemma} {\rm(\cite[Theorem~5.10(ii)]{Vi2})}\label{mono}
Given any $\mu,\nu\in\cP_2$ and any $\sigma\in\Pi(\mu,\nu)$,  
the following two properties are equivalent to each other.\\
$(i)$ $\sigma$ attains the infimum in \eqref{eq:wass}.\\
$(ii)$ For any $n\in \N$ and $\{(x_i,y_i)\}_{i=1}^n \subset \spt(\sigma)$ with the convention $y_{n+1}=y_1$, 
we have 
\[
\sum_{i=1}^n |x_i-y_i|^2 \leq \sum_{i=1}^n |x_i-y_{i+1}|^2. 
\]
\end{lemma}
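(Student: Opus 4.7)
The plan is to treat the two implications separately, abbreviating $c(x,y) := |x-y|^2$ and $S := \spt(\sigma)$; the harder direction, $(ii) \Rightarrow (i)$, hinges on recovering a Kantorovich potential from the cyclically monotone structure. For $(i) \Rightarrow (ii)$, I would argue by contraposition via a local swap. Suppose there exist $(x_i,y_i)_{i=1}^n \in S$ with
\[
\delta := \sum_{i=1}^n c(x_i,y_i) - \sum_{i=1}^n c(x_i,y_{i+1}) > 0.
\]
By continuity of $c$, shrink to pairwise disjoint open boxes $U_i \times V_i \ni (x_i,y_i)$ on which the same strict inequality persists with slack at least $\delta/2$ for every choice of points. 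Each $m_i := \sigma(U_i \times V_i)$ is positive because $(x_i,y_i) \in S$; writing $\sigma_i := m_i^{-1}\sigma|_{U_i \times V_i}$ with marginals $\alpha_i$ on $U_i$ and $\beta_i$ on $V_i$, the competitor
\[
\tilde\sigma := \sigma + \varepsilon \sum_{i=1}^n \bigl( \alpha_i \otimes \beta_{i+1} - \sigma_i \bigr), \qquad \varepsilon := \min_i m_i,
\]
is a nonnegative element of $\Pi(\mu,\nu)$ with $\int c\,d\tilde\sigma < \int c\,d\sigma$, contradicting optimality of $\sigma$.

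For $(ii) \Rightarrow (i)$, I build a Rockafellar--R\"uschendorf potential adapted to $S$. Fix $(x_0,y_0) \in S$ and set
\[
\phi(x) := \inf \Bigl\{ c(x,y_n) - c(x_n,y_n) + \sum_{i=0}^{n-1}\bigl( c(x_{i+1},y_i) - c(x_i,y_i) \bigr) \Bigr\},
\]
where the infimum runs over $n \geq 0$ and chains $(x_1,y_1),\ldots,(x_n,y_n) \in S$. Cyclical monotonicity applied to the closure of a chain back to $(x_0,y_0)$ gives $\phi(x_0) \geq 0$, while the empty chain gives $\phi(x_0) \leq 0$, so $\phi(x_0) = 0$ and $\phi$ is finite. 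Appending any $(x',y') \in S$ to a chain yields the key inequality $\phi(x) \leq \phi(x') + c(x,y') - c(x',y')$. Hence, setting $\phi^c(y) := \inf_x\bigl(c(x,y) - \phi(x)\bigr)$, one obtains $\phi(x) + \phi^c(y) \leq c(x,y)$ pointwise with equality on $S$. Integrating against $\sigma$ and against any competitor $\sigma' \in \Pi(\mu,\nu)$ gives
\[
\int c \, d\sigma = \int \phi \, d\mu + \int \phi^c \, d\nu \leq \int c \, d\sigma',
\]
so $\sigma$ attains the infimum.

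The principal obstacle sits in the second direction: ensuring $\phi$ and $\phi^c$ are Borel measurable and, respectively, $\mu$- and $\nu$-integrable, so that the integrated duality above is legitimate. Measurability is secured by restricting the defining infimum to chains with vertices in a fixed countable dense subset of $S$ (permissible by continuity of $c$), and integrability follows from the quadratic growth bounds $|\phi(x)| \leq C(1+|x|^2)$ and $|\phi^c(y)| \leq C(1+|y|^2)$ paired with the finite second moments of $\mu$ and $\nu$. A secondary care point in the first direction is choosing the boxes small enough that $\int c\,d(\alpha_i \otimes \beta_{i+1}) - \int c\,d\sigma_i$ approximates $c(x_i,y_{i+1}) - c(x_i,y_i)$ to within $\delta/(4n)$, which is what actually guarantees the strict decrease in cost of $\tilde\sigma$.
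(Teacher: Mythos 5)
The paper itself does not prove this lemma --- it is quoted from \cite[Theorem~5.10(ii)]{Vi2} --- so your proposal is judged on its own merits. Your architecture is the standard one for this equivalence: a local swap for $(i)\Rightarrow(ii)$ and a Rockafellar--R\"uschendorf potential plus weak duality for $(ii)\Rightarrow(i)$. The first direction is essentially correct, and in fact your final ``care point'' is unnecessary: since in $\alpha_i\otimes\beta_{i+1}$ the two coordinates are independent, integrating your uniform $\delta/2$ slack over the product of the $\sigma_i$'s directly gives $\int c\,d\tilde\sigma-\int c\,d\sigma\leq-\varepsilon\delta/2$, with no need to shrink boxes further. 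One real caveat there: if the cyclic family has repeated points, pairwise disjoint boxes do not exist; the fix is to drop disjointness and take $\varepsilon=n^{-1}\min_i m_i$, so that $\varepsilon\sum_i\sigma_i\leq n^{-1}\sum_i\sigma|_{U_i\times V_i}\leq\sigma$ still holds.

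The genuine gap is the integrability step in $(ii)\Rightarrow(i)$: the claimed two-sided bound $|\phi(x)|\leq C(1+|x|^2)$ is false. The upper bound $\phi(x)\leq c(x,y_0)-c(x_0,y_0)$ is fine (empty chain), but the only lower bound your construction yields is \emph{partner-dependent}: for $(x,y)\in\spt(\sigma)$, closing the chain through $(x,y)$ gives $\phi(x)\geq c(x,y)-c(x_0,y)\geq -c(x_0,y)$, which is quadratic in $|y|$, not in $|x|$; off the projection of $\spt(\sigma)$, $\phi$ may even equal $-\infty$. This is sharp: in $d=1$ take the monotone (hence cyclically monotone) coupling of $\mu=c\sum_n n^{-8}\delta_n$ and $\nu=c\sum_n n^{-8}\delta_{n^3}$, for which $\phi(n)\sim -n^4/2$, so no bound $\phi(x)\geq -C(1+|x|^2)$ can hold. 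The conclusion survives, but by a different mechanism: integrate the partner bound against $\sigma$ itself, $\int(-\phi(x))\,d\sigma(x,y)\leq\int c(x_0,y)\,d\nu(y)<\infty$ by the finite second moment of $\nu$, and symmetrically $\phi^c(y)\geq -c(x,y_0)+c(x_0,y_0)$ on $\spt(\sigma)$ controls $\int(\phi^c)^-\,d\nu$ via the second moment of $\mu$; alternatively, use only the integrable positive parts together with the $\sigma$-a.e.\ equality $\phi(x)+\phi^c(y)=c(x,y)$ and $\int c\,d\sigma<\infty$ to force both integrals to be finite. A smaller repair: your countable-dense-chain device makes $\phi$ Borel (a countable infimum of continuous functions), but it does not apply to $\phi^c$, since $c(\cdot,y)-\phi(\cdot)$ is only lower semicontinuous and an infimum over a dense set need not equal the true infimum; instead observe that $\phi^c$ is an arbitrary infimum of functions continuous in $y$, hence upper semicontinuous and therefore Borel --- and the same observation makes the dense-subset trick for $\phi$ unnecessary as well.
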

\noindent
{\it Proof of Theorem.}
Take any $\mu,\nu \in \cP_{2,0} \setminus\{\delta_0\}$ and fix them.
We first remark that $\mu \times \nu$ attains the infimum in~\eqref{eq:wass}  
if and only if $W_2(\mu,\nu)^2=W_2(\delta_0,\mu)^2+W_2(\delta_0,\nu)^2$ holds.

If  $\angle (\mu,\nu)=\pi/2$, then $\mu \times \nu$ attains the infimum in~\eqref{eq:wass} due to~\eqref{eq:2}.
For any $(x,y), (\xi,\eta) \in \spt(\mu \times \nu)$, $(x,\eta), (\xi, y)$ also lie in $\spt(\mu \times \nu)$ and Lemma~\ref{mono} yields that 
\[
|x-y|^2+|\xi-\eta|^2 \leq |x-\eta|^2+|\xi-y|^2 \leq |x-y|^2+|\xi-\eta|^2 
\]
meaning $\lr{x-\xi}{y-\eta}=0$.
Since we take $(x,y) \in\spt (\mu\times \nu)$ arbitrarily, the spaces $\Span{\spt (\mu)-\xi}$ and $\Span{\spt (\nu)-\eta}$ are orthogonal.
Moreover, Lemma~\ref{lemma3} with arbitrary choice of $(\xi,\eta) \in \spt (\mu\times\nu)$ provides the orthogonality between $\Span{\spt (\mu)}$ and $\Span{\spt (\nu)}$.

Conversely 
suppose the orthogonality between  $\Span{\spt (\mu)}$ and $\Span{\spt(\nu)}$. 
Then $\mu \times \nu \in \Pi(\mu,\nu)$ satisfies condition~(ii) in Lemma~\ref{mono} and thus $\mu\times\nu$ attains the infimum in~\eqref{eq:wass}, 
which in turn implies  
\[
\angle(\mu,\nu)
= \arccos\left(\frac{W_2(\delta_0,\mu)^2+W_2(\delta_0,\nu)^2-W_2(\mu,\nu)^2 }{2W_2(\delta_0,\mu)W_2(\delta_0,\nu)}\right) 
=\frac\pi2.
\]
$\qedd$

\begin{remark}
(1)The ``if" part can be proved in a different  way:
for any $\mu,\nu \in \cP_2\setminus\{ \delta_0\}$, let $\theta(\mu,\nu)$ be the  smallest principal angle between $\Span{\spt(\mu)}$ and  $\Span{\spt(\nu)}$, that is, 
\[
 \theta(\mu,\nu):=\min \left\{ \arccos \frac{ \lr{x}{y} }{ |x||y|}   \Bigm|  x \in \Span{\spt(\mu)},\ y \in \Span{\spt(\nu)} , \ x, y \neq 0  \right\}. 
\]
Note that $\theta(\mu,\nu) \in[0, \pi/2]$.
If  the relation $\angle (\mu,\nu) \geq \theta (\mu,\nu)$ holds,  
then the orthogonality between  $\Span{\spt(\mu)}$ and  $\Span{\spt(\nu)}$, that is $\theta (\mu,\nu)=\pi/2$, implies $\angle (\mu,\nu)=\pi/2$.

To prove the relation $\angle (\mu,\nu) \geq \theta (\mu,\nu)$, let $\sigma \in \Pi(\mu,\nu)$ be optimal.
Then  $\sigma$ is supported on $\Span{\spt(\mu)}\times\Span{\spt(\nu)}$ and H\"older's inequality yields that   
\begin{align}\label{eq:1}\notag
 W_2(\mu,\nu)^2  
%
&\geq W_2(\delta_0,\mu)^2+W_2(\delta_0,\nu)^2 -2 \cos \theta (\mu,\nu) \int_{\R^d \times \R^d} |x| |y| d \pi (x,y) \\ 
&\geq W_2(\delta_0,\mu)^2+W_2(\delta_0,\nu)^2 -2 \cos \theta(\mu,\nu)   W_2(\delta_0,\mu) W_2(\delta_0,\nu) ,
\end{align}
which shows $\angle (\mu,\nu) \geq \theta (\mu,\nu)$  as desired.
The relation $\angle (\mu,\nu) = \theta (\mu,\nu)$ holds if and only if all the inequalities in~\eqref{eq:1} are equalities, which is equivalent to the existence of $a \in \R$ 
such that $\lr{x}{y}=a|x|^2 \cos \theta(\mu,\nu)$ for $\sigma$-a.e.\ $(x,y) \in \R^d \times \R^d$.

(2)
In the case of $d=1$, 
Theorem yields that the angle between any pair in $\cP_{2,0} \setminus \{\delta_0\}$ is strictly less than $\pi/2$,
however the vertex angle equals $\pi/2$.
Indeed, set 
\begin{gather*}
\mu:=\frac12\left\{ \delta_1+\delta_{-1}\right\}, \quad
\nu_n:=\left(1-\frac{1}{n^2} \right) \delta_0 +\frac1{2n^2}\left\{\delta_n+\delta_{-n}\right\},\\
\sigma_n:=\frac12 \left(1-\frac{1}{n^2} \right)\left\{\delta_{(1,0)}+\delta_{(-1,0)}\right\}  +\frac1{2n^2}\left\{\delta_{(1,n)}+\delta_{(-1,-n)}\right\}
\end{gather*}
for any $n \in \N$.
Then we have $\mu,\nu_n \in \cP_{2,0}$ and $\sigma_n \in \Pi(\mu,\nu_n)$  attains the infimum in \eqref{eq:wass}, which implies 
\[
\angle(\mu,\nu_n)=\arccos\left(\frac{W_2(\delta_0,\mu)^2+W_2(\delta_0,\nu)^2-W_2(\mu,\nu)^2 }{2W_2(\delta_0,\mu)W_2(\delta_0,\nu)}\right)=\arccos\frac1n.
\]
\end{remark}


\end{document}